\documentclass[11pt]{article}

\usepackage[english]{babel}
\usepackage{amsmath,amssymb,amsthm}
\usepackage{graphicx, bm}
\usepackage{multirow}

\newtheorem{thm}{Theorem}
\newtheorem{lem}{Lemma}
\newtheorem{rmk}{Remark}


\graphicspath{{./figuras/}}
\parindent=0mm  
\bibliographystyle{elsarticle-num}
\begin{document}

\title{An optimal aggregation type classifier}
\author{Alejandro Cholaquidis, Ricardo Fraiman, Juan Kalemkerian, Pamela Llop}

\begin{center}
\Large \bf A. Cholaquidis, Ricardo Fraiman, Juan Kalemkerian and Pamela Llop 
\end{center}

\abstract{We introduce a nonlinear aggregation type classifier for
functional data defined on a separable and complete metric space.
The new rule is built up from a collection of  $M$ arbitrary
training classifiers. If the classifiers are consistent, then  so is the aggregation rule. Moreover, asymptotically the aggregation rule behaves as well as the best of the $M$
classifiers. The results of a small si\-mu\-lation are reported both, for high dimensional
and functional data.}

\noindent\it Key words\rm :Functional data; supervised classification; non-linear aggregation.

 \section{Introduction} 
 Supervised classification is still one of the  hot topics for high dimensional and functional data due to the importance of their
 applications and the intrinsic difficulty in a general setup. In particular, there is a
 large list of linear aggregation methods developed in recent years, like boosting
 (\cite{breiman_96}, \cite{breiman_98}), random forest (\cite{breiman_2001},
 \cite{BDL_2008}, \cite{biau_2012}), among others. All these
 methods exhibit an important improvement when combining a subset of
 classifiers to produce a new one. Most of the contributions to the
 aggregation literature have been proposed for nonparametric
 regression, a problem closely related to
 classification rules,  which can be obtained just by plugging in the estimate of the
 regression function into the Bayes rule (see for instance, \cite{yang_2004} and \cite{bunea_2007}). Model
 selection (select the optimal single model from a list of models),
 convex aggregation (search for the optimal convex combination of a given 
 set of estimators), and linear aggregation (select the
 optimal linear combination of estimators) are important contributions among a
 large list.  Our approach is to combine, in a nonlinear way, several classifiers to construct an optimal one. We follow the ideas in Mojirsheibani \cite{mojir1} and \cite{mojir2} who introduced a combined classifier for the finite dimensional setup and showed strong consistency under someway hard to verify assumptions involving the Vapnik Chervonenkis dimension of the random partitions of the set of classifiers which are also non--valid in the functional setup. We extend the ideas to the functional setup and provide consistency results as well as rates of convergence under very mild assumptions.  We also show optimality properties of the aggregated rule which exhibit a good behavior in high dimensional and functional data. Very recently, Biau et al. \cite{biau_2013} introduced a new nonlinear aggregation strategy for the regression problem called COBRA, extending the ideas in Mojirsheibani \cite{mojir1} to the more general setup of nonparametric regression.  See also \cite{FM} for some related ideas 
regarding density estimation. 
  
In section \ref{setup} we introduce the new classifier in the general context of a separable and complete metric space which combines, in a nonlinear way, the decision of $M$ experts (classifiers). A more flexible rule is also considered. In Section \ref{resultados} we state our two main results regarding consistency, rates of convergence and asymptotic optimality of the classifier. Asymptotically, the new rule performs as the best of the $M$ classifiers used to build it up. Section \ref{simus} is devoted to show through some simulations the performance of the new classifier in high dimensional and functional data for moderate sample sizes. All proofs are given in the Appendix.

\section{The setup}\label{setup}

Throughout the manuscript $\mathcal{F}$ will denote a separable and complete metric space, $(X,Y)$ a
random pair taking values in $\mathcal{F} \times \{0,1\}$ and by $\mu$  the probability measure of
$X$. The elements of the training sample  $\mathcal{D}_n\hspace{-0.1cm}=\hspace{-0.1cm}\{(X_1,Y_1),\dots,(X_n,Y_n)\}$, are iid
random elements with the same distribution as the pair $(X,Y)$.  The regression function is denoted by $\eta(x)= \mathbb{E}(Y|X=x) = \mathbb P(Y=1|X=x)$, the Bayes rule by 
 $g^*(x)=\mathbb{I}_{\{\eta(x)>1/2\}}$ and the optimal Bayes risk by $L^*=\mathbb{P}\big(g^*(X)\neq Y\big)$.

In order to define our classifier, we split the sample $\mathcal{D}_n$ into two subsamples
$\mathcal{D}_k=\big\{(X_1,Y_1),\dots,(X_k,Y_k)\big\}$ and
$\mathcal{E}_l=\big\{(X_{k+1},Y_{k+1}),\dots,(X_n,Y_n)\big\}$ with $l=n-k\geq 1$. With
$\mathcal{D}_k$ we build up $M$ classifiers $g_{mk}:\mathcal{F}\rightarrow \{0,1\}$, $m=1,\dots,M$
which we place in the vector $\mathbf{g_k}(x) \doteq \big(g_{1k}(x),\dots,g_{Mk}(x)\big)$ and,
following some ideas in \cite{mojir1}, with $\mathcal{E}_l$ we construct
our aggregate classifier as follows,
\begin{equation}\label{clasificador}
g_T(x)=\mathbb{I}_{\{T_n(\mathbf{g_k}(x))>1/2\}},
\end{equation}
where
\begin{equation}\label{agregados}
T_n(\mathbf{g_k}(x))=\sum_{j=k+1}^n W_{n,j}(x)Y_{j},\quad x\in \mathcal{F},
\end{equation}
with weights $W_{n,j}(x)$ given by
\begin{equation}\label{pesos}
W_{n,j}(x)=\frac{\mathbb{I}_{\{\mathbf{g_k}(x)=\mathbf{g_k}(X_j)\}}}{\sum_{i=k+1}^n\mathbb{I}_{\{
\mathbf{g_k}(x)=\mathbf{g_k} (X_i)\}}}.
\end{equation}
Here, $0/0$ is assumed to be $0$.

For $0 \le \alpha < 1$ a more flexible version of the classifier, called $g_T(x,\alpha)$, can be
defined replacing the weights in (\ref{pesos}) by
\begin{equation}\label{pesos2}
W_{n,j}(x)=\frac{\mathbb{I}_{\{\frac{1}{M}\sum_{m=1}^M \mathbb{I}_{\{g_{mk}(x)
= g_{mk}(X_j)\}} \ge 1-\alpha\}}}{\sum_{i=k+1}^n\mathbb{I}_{\{\frac{1}{M}\sum_{m=1}^M
\mathbb{I}_{\{g_{mk}(x) = g_{mk}(X_i)\}} \ge 1-\alpha\}}}.
\end{equation}
More precisely, the more flexible version of the classifier (\ref{clasificador}) is given by
\begin{equation}\label{clasifgen}
g_T(x,\alpha)=\mathbb{I}_{\{T_n(\mathbf{g_k}(x),\alpha)>1/2\}},
\end{equation}
where $T_n(\mathbf{g_k}(x),\alpha)$ is defined as in (\ref{agregados}) but with the 
weights given by (\ref{pesos2}).
Observe that if we choose $\alpha=0$ in (\ref{pesos2}) and (\ref{clasifgen}) we obtain the weights given in (\ref{pesos})
and the classifier (\ref{clasificador}) respectively. 
\begin{rmk}
\begin{itemize}
\item[a)] The type of nonlinear aggregation used to define our classifiers turns out
to be quite natural. Indeed, we give a weight different from zero
to those $X_j$ which classify $x$ in the same group as the whole
set of classifiers $\mathbf{g_k}(X_j)$ (or $100(1 - \alpha) \%$ of
them). 
\item[b)] Since we are using the inverse functions of the classifiers $g_{mk}$, observations which
are far from $x$  for which the condition mentioned in a) is fulfilled are involved in the
definition of the classification rule. This may be very important in the case of high dimensional
data. This is illustrated in figure \ref{fig:example}.
\end{itemize}
\end{rmk}

 \begin{figure}[!ht]
 \begin{center}
 \includegraphics[width=0.49\textwidth]{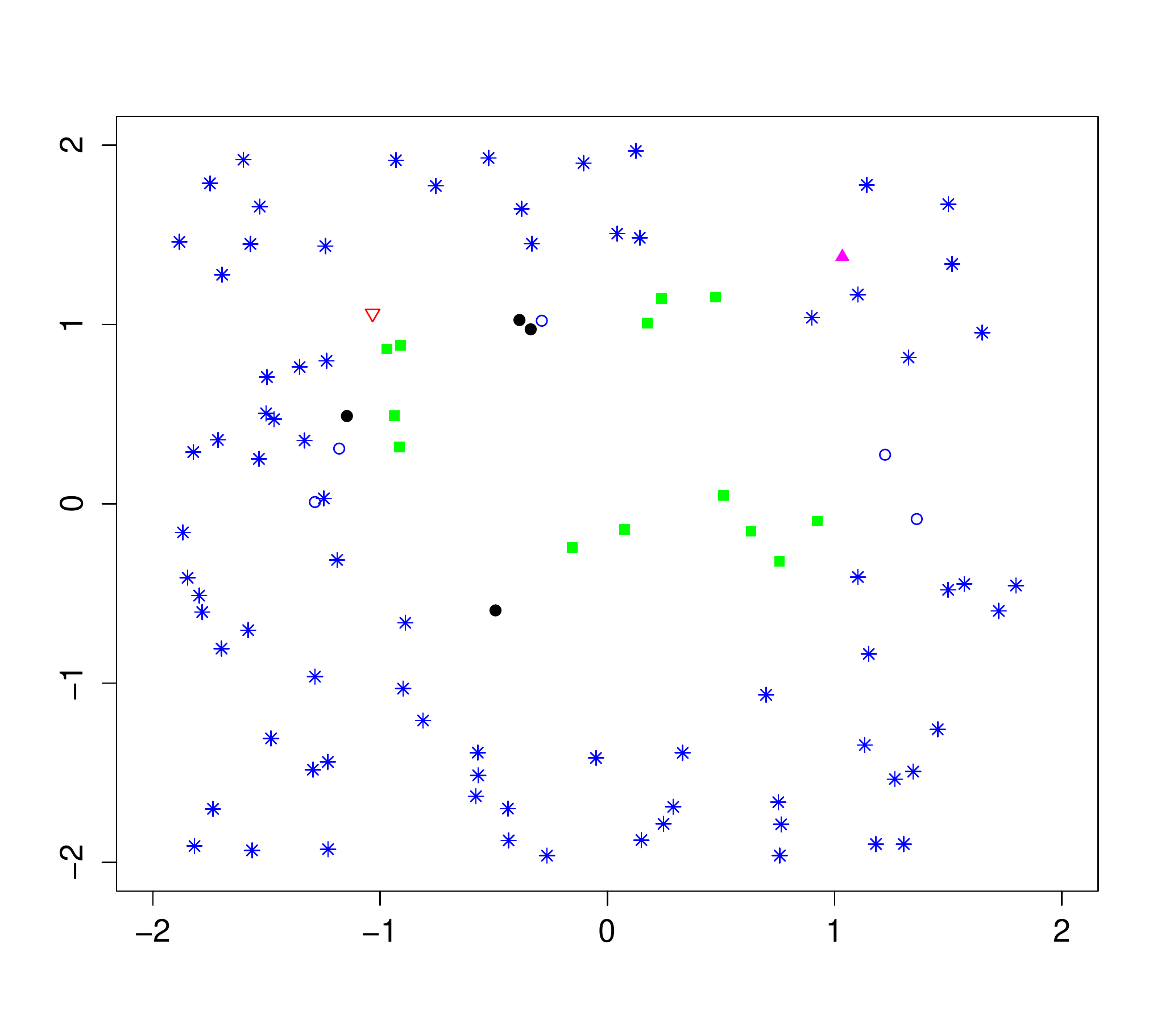}
 \includegraphics[width=0.49\textwidth]{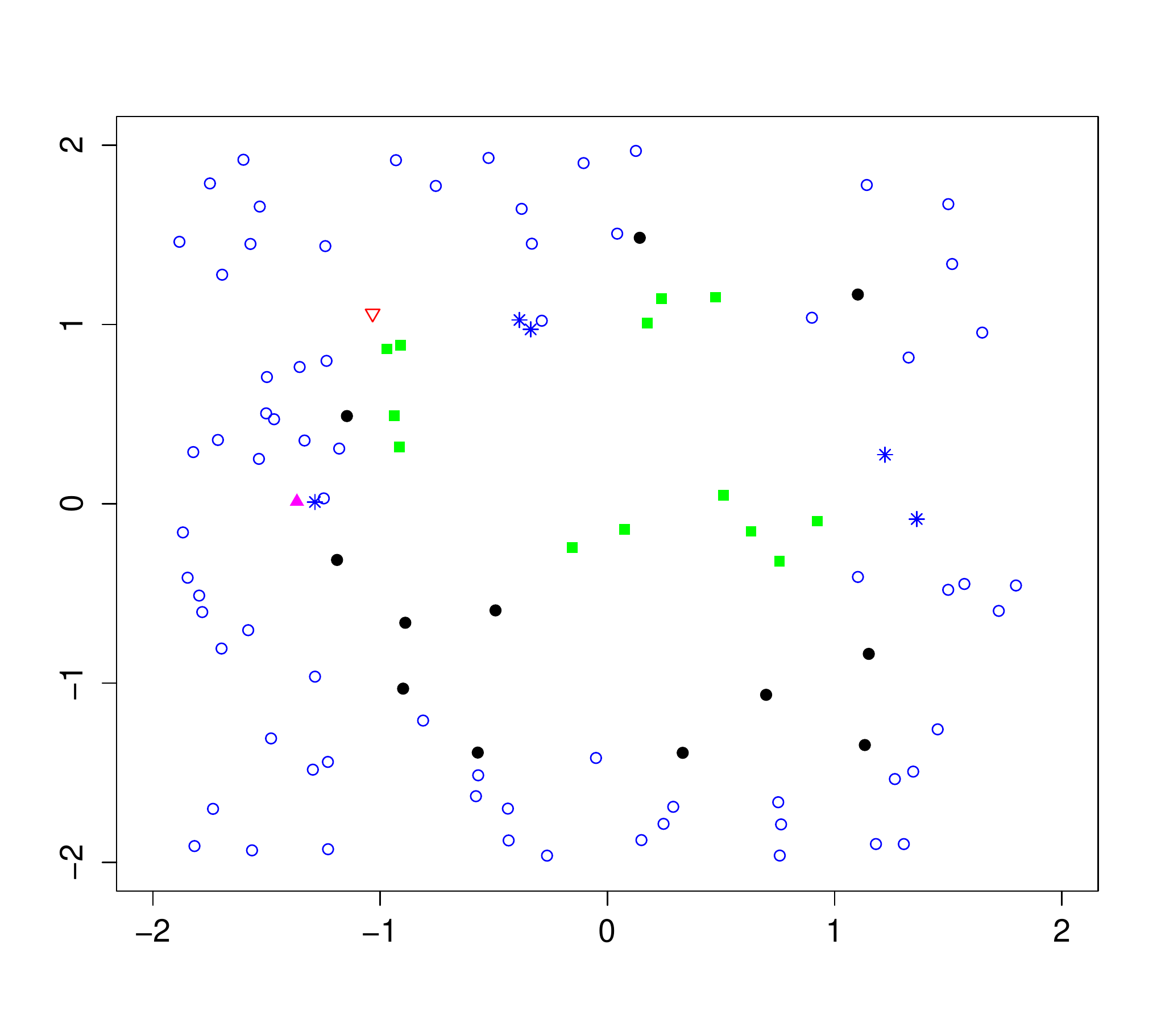}
 \end{center}
 \vspace{-1cm} 
 \caption{An example in $\mathbb R^2$ where, in both pictures, the voters of the red empty interior triangle $\nabla$  are shown as green squares while the voters of the filled magenta triangle $\Delta$  are shown with blue asterisks. The 0-class points that do not vote are shown as black dots and the 1-class points that do not vote are shown as blue circles.}
 \label{fig:example}
 \end{figure}
	
\section{Asymptotic results}\label{resultados}
In this section we show two asymptotic results for the nonlinear aggregation classifier
$g_T(X,\alpha)$ (which in particular include the corresponding result for $g_T(X)$, taking $\alpha=0$). The first one shows that the classifier
$g_T(X,\alpha)$ is consistent if, for $0\leq \alpha <0.5$, at least $R\geq (1-\alpha)M$ of them are consistent classifiers. 
Moreover, rates of convergence for $g_T(X,\alpha)$ (and $g_T(X)$) are obtained assuming we know the rates of
convergence of the $R$ consistent classifiers. The second result, shows that $g_T(X,\alpha)$ (and
in consequence $g_T(X)$) behaves asymptotically as the best of the $M$ classifiers used to build it
up. In particular, this implies that if only one of the $M$ classifiers is consistent, then our rule
is also consistent. To obtain this second result, we require a slightly stronger condition than the
one used for the first result. Throughout this section we will use the notation $\mathbb
P_{\mathcal{D}_k}(\cdot)=\mathbb
P(\cdot|\mathcal{D}_k)$.

\begin{thm} \label{consistencia} \label{cor1} Assume that for every $m=1, \ldots, R$ the classifier
$g_{mk}$
converges in probability to $g^*$ as $k \to \infty$, with $R\geq M(1-\alpha)$ and $\alpha \in [0,1/2)$. 
  Let us assume that $\mathbb P(Y=1|g^*(X)=1)>1/2$ and $\mathbb
P(Y=0|g^*(X)=0)>1/2$, then 
\begin{itemize}
\item[a)] $\lim_{min\{k,l\}\rightarrow \infty} \mathbb P_{\mathcal{D}_k}(g_T(X,\alpha)\neq
Y)-L^*=0.$
\item[b)] Let $\beta_{mk}\rightarrow 0$ as $k \to \infty$, for $m=1,\dots,R$ and $\bm{\beta_{Rk}}=\max_{m=1,\dots,R}\beta_{mk}$.
  If $ \mathbb{P}_{\mathcal{D}_k}\big(g^*(X)\neq g_{mk}(X)\big)=\mathcal{O}(\beta_{mk})$, then, for $k$ large enough,
\begin{equation*} 
 \mathbb P_{\mathcal{D}_k}(g_T(X,\alpha)\neq Y)-L^*=\mathcal{O}\Big(\max\big\{\exp(-C(n-k)),\bm{\beta_{Rk}}\big\}\Big),
\end{equation*}
for some constant $C>0$. 
\end{itemize}
\end{thm}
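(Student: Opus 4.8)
The plan is to reduce both statements to a single bound on the probability that the aggregated rule disagrees with the Bayes rule. For any fixed classifier the excess risk equals $\mathbb E\big[|2\eta(X)-1|\,\mathbb{I}_{\{g(X)\neq g^*(X)\}}\big]\le \mathbb P(g(X)\neq g^*(X))$; applying this pointwise in $\mathcal{E}_l$ and integrating, it suffices to estimate $\mathbb P_{\mathcal{D}_k}(g_T(X,\alpha)\neq g^*(X))$. Part (b) will be the quantitative form of this estimate and part (a) its qualitative consequence.

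First I would isolate the badly classified region. Conditionally on $\mathcal{D}_k$ the maps $g_{1k},\dots,g_{Mk}$ are fixed, so $B_k\doteq\bigcup_{m=1}^R\{x:g_{mk}(x)\neq g^*(x)\}$ is a fixed set with $\mu(B_k)\le\sum_{m=1}^R\mathbb P_{\mathcal{D}_k}(g_{mk}(X)\neq g^*(X))$; since $R$ is fixed this is $\mathcal O(\bm{\beta_{Rk}})$ under (b) and tends to $0$ under (a).

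The heart of the argument is a combinatorial sandwich on $B_k^c$ in which the hypotheses $\alpha<1/2$ and $R\ge(1-\alpha)M$ are used decisively. Fix $x\in B_k^c$ and let $A(x)=\{z:\tfrac1M\sum_{m}\mathbb{I}_{\{g_{mk}(x)=g_{mk}(z)\}}\ge 1-\alpha\}$ be the set of points receiving positive weight. If $z\in B_k^c$ as well, the $R$ consistent classifiers agree at $x$ and $z$ precisely when $g^*(x)=g^*(z)$, so the agreement count is $\ge R\ge(1-\alpha)M$ if $g^*(z)=g^*(x)$ and $\le M-R\le\alpha M<(1-\alpha)M$ if $g^*(z)\neq g^*(x)$; hence
\[
\{g^*=g^*(x)\}\setminus B_k\ \subseteq\ A(x)\ \subseteq\ \{g^*=g^*(x)\}\cup B_k .
\]
I then set $\eta_\alpha(x)=\mathbb E_{\mathcal{D}_k}[Y\mid X\in A(x)]$. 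Writing $p_1=\mathbb P(Y=1\mid g^*(X)=1)$ and $p_0=\mathbb P(Y=0\mid g^*(X)=0)$, the sandwich together with $\mu(B_k)$ small gives $\eta_\alpha(x)=p_1+\mathcal O(\mu(B_k))>1/2$ when $g^*(x)=1$ and $\eta_\alpha(x)=(1-p_0)+\mathcal O(\mu(B_k))<1/2$ when $g^*(x)=0$; thus, for $k$ large, $|\eta_\alpha(x)-\tfrac12|\ge\delta$ uniformly on $B_k^c$ for some fixed $\delta>0$.

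It remains to add a concentration step. Conditionally on $\mathcal{D}_k$ and on $\{N_x=\nu\}$, where $N_x$ is the number of points of $\mathcal{E}_l$ falling in $A(x)$, the associated labels are i.i.d. $\mathrm{Bernoulli}(\eta_\alpha(x))$, so $\nu\,T_n(\mathbf{g_k}(x),\alpha)\sim\mathrm{Binomial}(\nu,\eta_\alpha(x))$ and Hoeffding's inequality yields $\mathbb P_{\mathcal{D}_k}(g_T(x,\alpha)\neq g^*(x)\mid N_x=\nu)\le\exp(-2\nu\delta^2)$. Since $\mu(A(x))$ is bounded away from $0$ (it contains $\{g^*=g^*(x)\}\setminus B_k$), a second Hoeffding bound gives $\mathbb P_{\mathcal{D}_k}(N_x<cl)\le\exp(-C l)$ with $l=n-k$, so $\mathbb P_{\mathcal{D}_k}(g_T(x,\alpha)\neq g^*(x))\le C_1\exp(-C_2 l)$ uniformly on $B_k^c$. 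Integrating in $x$ and bounding the contribution of $B_k$ by $\mu(B_k)$ gives
\[
\mathbb P_{\mathcal{D}_k}(g_T(X,\alpha)\neq g^*(X))\le C_1\exp(-C_2(n-k))+\mu(B_k),
\]
which is $\mathcal O\big(\max\{\exp(-C(n-k)),\bm{\beta_{Rk}}\}\big)$, proving (b), and tends to $0$, proving (a). I expect the combinatorial sandwich to be the main obstacle, since it is where $\alpha<1/2$ enters and where the rare ``dirty'' sample points (those in $B_k$) must be absorbed through $\eta_\alpha$; a secondary subtlety is that in (a) the bound is only valid on the $\mathcal{D}_k$-event, of probability tending to one, on which $\mu(B_k)$ is small enough to push $\eta_\alpha$ away from $1/2$.
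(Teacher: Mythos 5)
Your proof is correct and follows essentially the same route as the paper's: both reduce the excess risk to $\mathbb P_{\mathcal{D}_k}(g_T(X,\alpha)\neq g^*(X))$, absorb the set where some of the $R$ consistent classifiers err at cost $\mathcal{O}(\bm{\beta_{Rk}})$, use $R\ge(1-\alpha)M$ together with $\alpha<1/2$ to show that the positive-weight set coincides (up to that bad set) with the Bayes cell of $x$, invoke $\mathbb P(Y=1\mid g^*(X)=1)>1/2$ to push the vote mean away from $1/2$, and finish with Hoeffding. The only difference is organizational: you concentrate the count $N_x$ and the conditional binomial separately, whereas the paper applies Hoeffding once to $\frac{1}{n-k}\sum_j Z_j\left(Y_j-\tfrac{1}{2}\right)$, whose sign determines the classification and whose mean is bounded below by a positive constant.
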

In order to state the optimality result we introduce some additional notation. Let $\mathbb{C}
\doteq \{0,1\}^M$. For $\nu \in \mathbb{C}$ we define the following subsets
\[
A_{\nu}^0 \doteq \bigcap_{m=1}^M g_{mk}^{-1}(\nu(m)) \cap \{Y=0\},
\hspace{0.3cm}  A_{\nu}^1 \doteq \bigcap_{m=1}^M g_{mk}^{-1}(\nu(m))
\cap \{Y=1\},
\]
\[\text{ and }  \hspace{0.1cm} A_{\nu} =A_{\nu}^0 \cup A_{\nu}^1.
\]

For each $\nu \in \mathbb{C}$, we assume that
$$
(\mathcal{H}) \hspace{2cm}  \mathbb{P}_{\mathcal{D}_k}\big((X,Y) \in A_{\nu}^1\big)
\neq \mathbb{P}_{\mathcal{D}_k}\big((X,Y) \in A_{\nu}^0\big) \hspace{0.3cm} \text{with probability
one}.
$$

\begin{thm}\label{optimalidad}
Under assumption ($\mathcal{H}$) we have,
\[
\lim_{l \to \infty} \mathbb{P}_{\mathcal{D}_k}\big(g_T(X,\alpha) \neq Y\big)  \le
\mathbb{P}_{\mathcal{D}_k}\big(g_{mk}(X) \neq Y\big),
\]
for each $m=1,\ldots, M$ which implies that, 
\[
\lim_{l \to \infty} \mathbb{P}_{\mathcal{D}_k}\big(g_T(X,\alpha) \neq Y\big)  \le \min_{1\le m \le M
}
\mathbb{P}_{\mathcal{D}_k}\big(g_{mk}(X) \neq Y\big).
\]
\end{thm}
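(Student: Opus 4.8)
The plan is to analyze the conditional behavior of the aggregated classifier on each cell of the partition induced by the vector of base classifiers. Fix the first subsample $\mathcal{D}_k$, so that $\mathbf{g_k}$ and hence the partition $\{A_\nu : \nu \in \mathbb{C}\}$ are deterministic. For a point $x$ whose classifier signature is $\mathbf{g_k}(x)=\nu$, the weights in \eqref{pesos} put mass only on those $X_j$ in the cell $g_{mk}^{-1}(\nu)=\bigcap_m g_{mk}^{-1}(\nu(m))$, so that $T_n(\mathbf{g_k}(x))$ is exactly the empirical fraction of label-$1$ observations among the points of $\mathcal{E}_l$ falling in that cell. By the strong law of large numbers, conditionally on $\mathcal{D}_k$ and as $l\to\infty$, this empirical fraction converges to the conditional probability
\[
p_\nu \doteq \mathbb{P}_{\mathcal{D}_k}\big(Y=1 \mid \mathbf{g_k}(X)=\nu\big) = \frac{\mathbb{P}_{\mathcal{D}_k}\big((X,Y)\in A_\nu^1\big)}{\mathbb{P}_{\mathcal{D}_k}\big((X,Y)\in A_\nu\big)}.
\]

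First I would establish that, in the limit, $g_T$ reduces to the ``cellwise Bayes rule'': on cell $A_\nu$ it predicts $1$ precisely when $p_\nu>1/2$, equivalently when $\mathbb{P}_{\mathcal{D}_k}((X,Y)\in A_\nu^1)>\mathbb{P}_{\mathcal{D}_k}((X,Y)\in A_\nu^0)$. Assumption $(\mathcal{H})$ is exactly what rules out the degenerate tie $p_\nu=1/2$ almost surely, so the limiting decision on each cell is well defined and the convergence $T_n\to p_\nu$ translates into convergence of the indicator $g_T$ to this cellwise majority vote (the threshold crossing is stable away from the tie). Summing the resulting conditional error over all cells, the limiting conditional risk of $g_T$ equals $\sum_{\nu} \min\big\{\mathbb{P}_{\mathcal{D}_k}((X,Y)\in A_\nu^1),\, \mathbb{P}_{\mathcal{D}_k}((X,Y)\in A_\nu^0)\big\}$, since on each cell one commits the smaller of the two masses as error.

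The heart of the argument is then a purely combinatorial domination: for any fixed base classifier $g_{mk}$, its conditional error decomposes as $\sum_\nu \mathbb{P}_{\mathcal{D}_k}\big((X,Y)\in A_\nu, g_{mk}(X)\neq Y\big)$. On each cell $A_\nu$ the value $g_{mk}(X)=\nu(m)$ is constant, so $g_{mk}$ errs on $A_\nu^1$ if $\nu(m)=0$ and on $A_\nu^0$ if $\nu(m)=1$; in either case the contribution is one of the two masses $\mathbb{P}_{\mathcal{D}_k}((X,Y)\in A_\nu^1)$ or $\mathbb{P}_{\mathcal{D}_k}((X,Y)\in A_\nu^0)$, hence at least their minimum. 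Therefore the cellwise-minimum risk achieved by $g_T$ in the limit is bounded above by the risk of every $g_{mk}$, which gives the first displayed inequality; taking the minimum over $m$ yields the second. I would handle the flexible version $\alpha>0$ by the same scheme, noting that the $\alpha$-weights aggregate a union of cells (those agreeing with $x$ on at least $(1-\alpha)M$ classifiers) rather than a single cell, so one reruns the law-of-large-numbers step over the corresponding merged region.

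The main obstacle I anticipate is the measure-theoretic bookkeeping around cells of vanishing probability and the uniform control of the threshold crossing. The convergence $T_n\to p_\nu$ is only guaranteed on cells with $\mathbb{P}_{\mathcal{D}_k}((X,Y)\in A_\nu)>0$ (otherwise the denominator in \eqref{pesos} is eventually zero and the $0/0=0$ convention applies, contributing negligibly), and one must argue that the finitely many cells can be treated simultaneously so that the limit passes through the finite sum defining the risk. Since $\mathbb{C}$ is finite and the empirical cell-frequencies converge jointly almost surely, this is manageable, but making precise that assumption $(\mathcal{H})$ guarantees a strict gap away from $1/2$ — so that the convergence of $T_n$ genuinely forces $g_T$ onto the correct side for large $l$ — is the step that requires care.
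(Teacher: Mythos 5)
Your proposal follows essentially the same route as the paper's proof: decompose the error over the cells $A_\nu$, use the law of large numbers together with assumption $(\mathcal{H})$ to identify the limiting conditional risk as $\sum_{\nu}\min\big\{\mathbb{P}_{\mathcal{D}_k}\big((X,Y)\in A_{\nu}^1\big),\,\mathbb{P}_{\mathcal{D}_k}\big((X,Y)\in A_{\nu}^0\big)\big\}$, and observe that each $g_{mk}$ contributes on every cell one of the two masses, hence at least the minimum. If anything, you are slightly more attentive than the paper to the case $\alpha>0$, where the weights aggregate a union of cells and the paper's displayed decomposition implicitly proceeds as if $\alpha=0$.
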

%
%
 \section{A small simulation study}\label{simus}
 In this section we present  the performance of the aggregated classifier  in two different
 scenarios. The first one corresponds to high dimensional data, while in the second one we
consider  two simulated models for functional data analyzed in \cite{hall}. \\ 

\subsection*{High dimensional setting}
In this setting we show the performance of our method by analyzing data ge\-ne\-ra\-ted in $ \mathbb R^{150}$ in the following way: we ge\-ne\-rate $N$ iid  uniform random 
variables in $[0,1]$, say $Z_1,\ldots,Z_N$.
For each $i=1,\ldots, N$, if $Z_i >1/6$, we generate
a random  variable $X_i \in  \mathbb R^{150}$ with uniform distribution in  $[-2,2]^{150}$ and
$Y_i=1$. If $Z_i \le 1/6$, we generate a random variable $X_i\in \mathbb R^{150}$ with uniform
distribution in $\tau_v([-2,2]^{150})$ where $\tau_v$ is the translation along the direction
$(v,\dots,v)\in \mathbb{R}^{150}$ for $v = 1/4$ and set $Y_i =0$. Then we split the sample into two
 subsamples: with the first $n=N/2$ pairs $(X_i,Y_i)$, we build the training sample, with the
remaining $N/2$ we build the testing sample. 
 
We consider $M$ nearest neighbor classifiers  with the number of neighbors taken as follows:
\begin{enumerate}
\item\label{1} we fix $M=8$ consecutive odd numbers;
\item\label{2} we choose at random $M=10$ different odd integers between $1$ and \\
\mbox{$\min\{\sum_{i=1}^k Y_i, k-\sum_{i=1}^k Y_i\}$}.
\end{enumerate}
 For different sizes of $N$, $k$, $l$ and $\alpha$, we build up our
 classifier. In Table \ref{tabla1}, we report the misclassification errors for case
 \ref{1} when compared with the nearest neighbor rules build up with a sample size $n$
 taking  $5,7,9,11,13,15,17,19$ nearest neighbors (these classifiers are denoted by $g_{mk}$ for $m=1,\dots,8$).  The errors are shown in brackets  
 for a sample size $k$. In Table \ref{tabla2}, we report the misclassification errors for case
 \ref{2} and compare with the (optimal) cross validated nearest neighbor classifier.  

 \begin{table}[h]
\footnotesize{ 
 \begin{center}
  \begin{tabular}{|c|c|c|c|c|c|c|c|c|c|c|}
 \hline
 n (k)     &$g_T(\cdot)$&$g_T(\cdot,1/4)$&$g_{1k}$&$g_{2k}$&$g_{3k}$&$g_{4k}$&
$g_{5k}$&$g_{6k}$&$g_{7k}$&$g_{8k}$\\
 \hline
 
 400   &.046&.056&.071&.067&.066&.067&.068&.069&.071&.073  \\
 (300) &    &    &(.074)&(.072)&(.072)&(.073)&(.074)&(.077)&(.080)&(.082)\\   
 \hline     
 600   &.043&.052&.067&.062&.061&.061&.061&.062&.063&.065 \\
 (400)     &  &   & (.072) & (.069) & (.068) &  (.068) & (.069) & (.071) & (.073) & (.076)\\
 \hline
 800   &.037 & .045 & .062   & .057   & .055   &	.055   &	.055    & .056   & .056   &
.057\\
 (600)   & &  & (.066) & (.061) & (.060) & (.060)  & (.060) & (.061) & (.062) & (.064)\\
 \hline
 1000   &.035  & .043	   &	 .061   & .055   & .053   &.052     & .052   &.052    &	.053  &	.054
\\
 (700)     &  &  &(.065)  & (.060) & (.058) & (.057)  & (.057) & (.058) & (.059) & (.060)\\
 \hline
 \end{tabular}
 \end{center}}
 \caption{Misclassification error over $500$ for $\mathbb{R}^{150}$ with fixed number of
neighbors.}\label{tabla1}
 \end{table}

 \begin{table}[h]
 \footnotesize{
 \begin{center}
 \begin{tabular}{|c|c|c|c|c|c|c|}
 \hline
 n   & k& $g_T(\cdot)$ & $g_T(\cdot, 1/8)$ &  $g_T(\cdot, 1/4)$ & $gcv_{n}$ & $gcv_{k}$\\
 \hline
 400 &300 & .052 &.065 &	.077 & .068 &.073\\
 600 &400 & .049 &.063 &	.074 &  .061  &  .068\\
 800 &500 & .048 &.062 &	.073	 & .056 &  .061\\
 1000&700 & .047 &.061 & .072 &	.053	 & .058\\
 \hline
 \end{tabular}
 \end{center}}
 \caption{Misclassification error over $500$ repetitions for $\mathbb{R}^{150}$ with the number of neighbors chosen at random.}\label{tabla2}
 \end{table}
  
\subsection*{Functional data setting}
In this setting we show the performance of our method by analyzing the following two models considered in \cite{hall}:
\begin{itemize}
\item Model I: We generate two samples of size $n/2$ from different populations following the
model
\begin{equation*}\label{modelo-hall}	
X_{pi}(t) = \sum_{j=1}^6 \mu_{p,j} \phi_j(t) + e_{pi}(t), \hspace{1cm} p = 1,2,\hspace{0.5cm}  i =
1,\ldots, n/2,
\end{equation*}
where $\phi_j(t) = \sqrt{2} \sin(\pi j t)$, $\mu_{1,j}$ and $\mu_{2,j}$ are, respectively, the j-th coordinate of the mean vectors $\mu_{1} = (0,-0.5, 1, -0.5, 1, -0.5)$, and $\mu_{2} = (0, -0.75, 0.75, -0.15, 1.4, 0.1)$ while the errors are given by
\[
e_{pi}(t) = \sum_{j=1}^{40} \sqrt{\theta_j} Z_{pj} \phi_j(t), \hspace{1cm} p = 1,2,
\]
with $Z_{pj}\sim \mathcal{N}(0,1)$ and $\theta_j = 1/j^2$.

\item Model II: We generate two samples of size $n/2$ from different populations following the model
\begin{equation*}\label{modelo-hall2}	
X_{pi}(t) = \sum_{j=1}^3 \mu_{p,j} \phi_j(t) + e_{pi}(t), \hspace{1cm} p = 1,2,\hspace{0.5cm}  i =
1,\ldots, n/2,
\end{equation*}
where $\mu_{1} = 0.75\cdot(1, -1, 1)$ and $\mu_{2,j}$ the j-th coordinate of $\mu_{2} \equiv 0$,
$\theta_j = 1/j^2$ and the errors are given by
\[
e_{pi}(t) = \sum_{j=1}^{40} \sqrt{\theta_j} Z_{pj} \phi_j(t), \hspace{1cm} p = 1,2,
\]
with $Z_{pj}\sim \mathcal{N}(0,1)$ and $\theta_j = \exp\{- (2.1 - (j-1)/20)^2 \}$. 

This second model looks more challenging since although the means of the two populations are quite
different, the error process is very wiggly, concentrated in high frequencies (as shown in Figure
\ref{figmodiimean} left and right panel, respectively). So in this case, in order to  apply our
classification method, we have first performed the Nadaraya-Watson kernel smoother
(taking a normal kernel) to the training sample with different values of the bandwidths 
for each of the two populations. The values for the bandwidths were chosen via cross-validation 
with our classifier, varying the bandwidths between $.1$ and $.7$ (in intervals of length $.05$).
The optimal values, over 200 repetitions, were $h_1=.15$ for the first population
(with mean $\mu_1$) and $h_2=.7$ for the second one. Finally, we apply the classification
method to the raw (non-smoothed) curves of the testing sample.
\end{itemize}

\begin{figure}
\begin{center}
\includegraphics[width=0.45\textwidth]{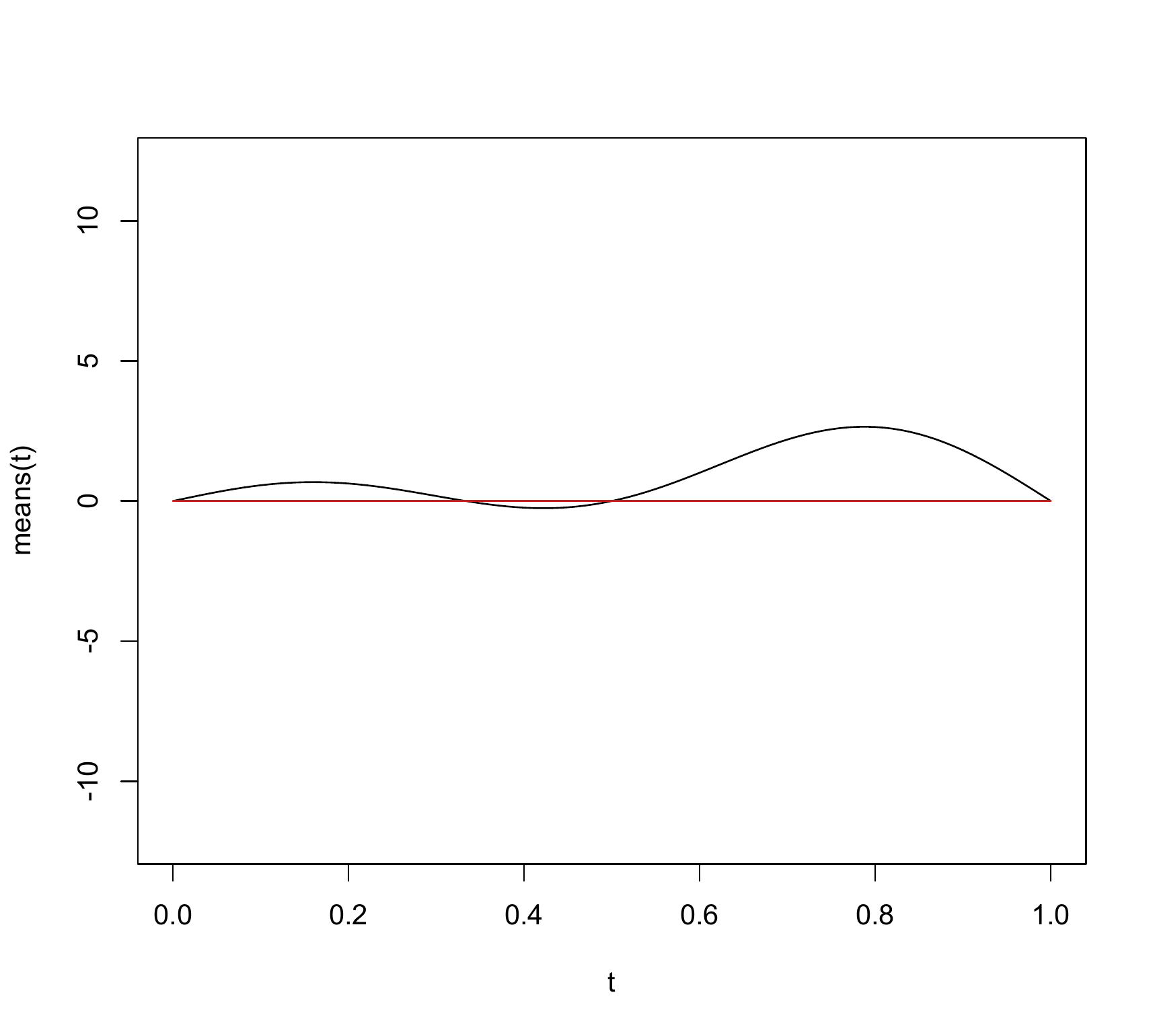} 
\includegraphics[width=0.45\textwidth]{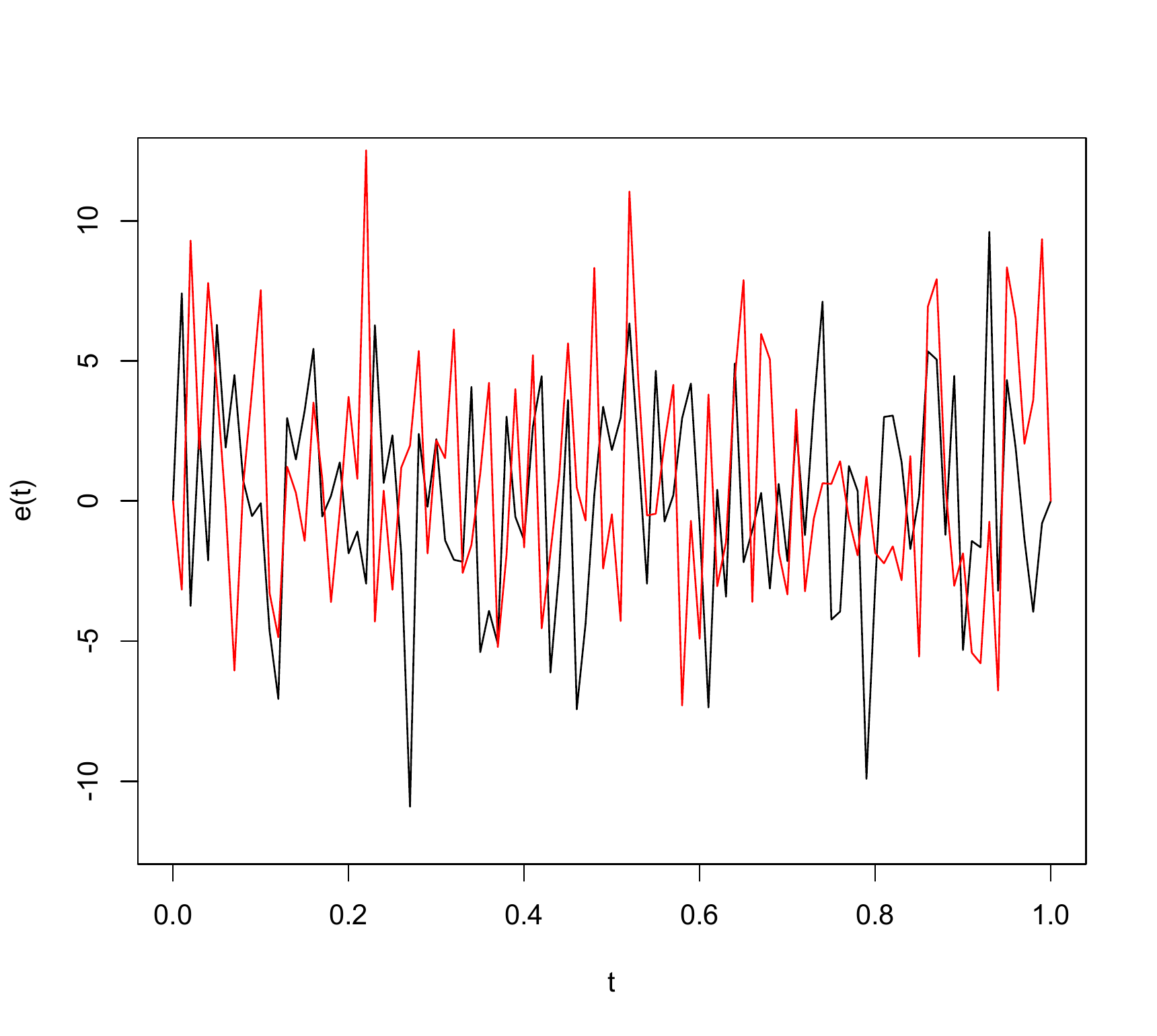}  
\caption{Mean curve (Left) and Error curve (Right) of the two populations of Model II.}
\label{figmodiimean}
\end{center}
\end{figure}
In Table \ref{tabla5} we report the misclassification error over $200$ replications for models  I and II, taking different values for $n$, $k$, $l$ and $\alpha$. In the whole training sample (of $n$
functions) the $n/2$ labels for every population were chosen at random. The test sample consist of $200$ data, taking $100$ of every population. Here, $g_{mk} = (2m-1)$-nearest neighbor rule for $m=1,\dots,5$. 
\begin{table}[!ht]
\begin{center}
\footnotesize{
\begin{tabular}{|c|c|c|c|c|c|c|c|c|c|}
\hline
 Model                  & n (k)    & $g_T(\cdot)$ & $g_T(\cdot, 1/5)$ &  $g_T(\cdot, 2/5)$ & 
$g_{1k}$ & $g_{2k}$ & $g_{3k}$  & $g_{4k}$  & $g_{5k}$\\
\hline
\multirow{4}{*}{I}       & 30       & .005         & .010              &  .014              &.032   
   &  .015    & .009      &.008       &.006\\			         			
		&(20)      &              &                   &                    & .043       &
.024     & .019      &.018      & .020\\
 \cline{2-10}
                         & 50        &.003         &  .006             &.008                & .025  
   &     .010  & .006      &.005       &.004   \\
                         &(30)       &              &                   &                    & .033 
   &     .016  & .012      &.010       &.009\\
 \hline
  \multirow{4}{*}{II}    & 30        &  .060        & .070              &  .079              &  .098
    & .074     &.073       & .074      &.077  \\ 				                    
&(20)       &              &                   &                    &  .097     &  .081     & .083  
  &.088       &.094 \\
 \cline{2-10}
                         & 50        &  .058        & .067              &  .071              & .105 
    & .074      &.067       &.065      &.067 \\
                         &(30)      &              &                   &                    &   .097
    & .075      &.073       &.076      &.080\\                          \hline 
 \end{tabular}}
 \end{center}
 \caption{Misclassification error over $200$ repetitions for models I and II.}\label{tabla5}
 \end{table}

For Model I we get a better performance than the PLS-Centroid Classifier proposed by \cite{hall}. For model II PLS-Centroid Classifier clearly outperforms our classifier although we get a quite small missclassification error, just using a combination of five nearest neighbor estimates.

\section{Concluding remarks}
 We introduce a new nonlinear aggregating method for supervised
classification in a general setup built up from a family of
 classifiers $g_{1k},\dots,g_{Mk}$. We prove consistency, rates of convergence and a certain  kind
of optimality, in the sense that the nonlinear aggregation rule behaves asymptotically as  well as
the best one among  the $M$ classifiers $g_{1k},\dots,g_{Mk}$. A small simulation study  confirms
the asymptotic results for moderate sample sizes. In particular it is well behaved for 
high--dimensional and functional data.
 
\section{Appendix: Proof of results}
To prove Theorem \ref{consistencia} we will need the following Lemma.
\begin{lem}\label{lema}
Let $f(x)$ be a classifier built up from the training sample $\mathcal{D}_k$ such that
$\mathbb P_{\mathcal{D}_k}(f(X)\neq g^*(X)) \rightarrow 0$ when
$k\rightarrow \infty$. Then, $\mathbb P_{\mathcal{D}_k}(f(X)\neq Y)-L^*\rightarrow 0$.
\end{lem}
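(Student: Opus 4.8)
The plan is to reduce the excess risk of any classifier to its probability of disagreement with the Bayes rule $g^*$, via the classical pointwise identity for the conditional risk. Since $f$ depends only on the training subsample $\mathcal{D}_k$ while the test pair $(X,Y)$ is independent of $\mathcal{D}_k$, I would first freeze $\mathcal{D}_k$ and treat $f$ as a deterministic classifier, so that all expectations below are conditional ones under $\mathbb P_{\mathcal{D}_k}$ and the usual (nonrandom) arguments apply verbatim.

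The first step is the standard conditioning on $X=x$. For a fixed classifier $g$ one has $\mathbb P(g(X)\neq Y \mid X=x) = \eta(x)\,\mathbb{I}_{\{g(x)=0\}} + (1-\eta(x))\,\mathbb{I}_{\{g(x)=1\}}$, and subtracting the analogous expression for $g^*$ gives the key identity
\[
\mathbb P(g(X)\neq Y\mid X=x) - \mathbb P(g^*(X)\neq Y\mid X=x) = |2\eta(x)-1|\,\mathbb{I}_{\{g(x)\neq g^*(x)\}}.
\]
Indeed, the two sides agree when $g(x)=g^*(x)$ (both vanish), while if $g(x)\neq g^*(x)$ the left-hand side equals $|2\eta(x)-1|$ because $g^*$ minimizes the conditional error by its very definition. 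Applying this with $g=f$ and integrating against $\mu$, conditionally on $\mathcal{D}_k$, yields
\[
\mathbb P_{\mathcal{D}_k}(f(X)\neq Y) - L^* = \mathbb E\big[\,|2\eta(X)-1|\,\mathbb{I}_{\{f(X)\neq g^*(X)\}}\,\big|\,\mathcal{D}_k\big].
\]

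The final step is the trivial bound $0 \le |2\eta(x)-1| \le 1$, which sandwiches the excess risk between $0$ and $\mathbb P_{\mathcal{D}_k}(f(X)\neq g^*(X))$. By hypothesis the latter tends to $0$ as $k\to\infty$, giving the claim. There is essentially no hard step here; the only point deserving care is the handling of the conditioning on $\mathcal{D}_k$, and this is immediate precisely because $(X,Y)$ is independent of the training sample, so that once $\mathcal{D}_k$ is frozen the textbook identity and its integration are legitimate.
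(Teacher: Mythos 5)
Your proof is correct, and it reaches the same key inequality as the paper --- namely $\mathbb P_{\mathcal{D}_k}(f(X)\neq Y)-L^*\le \mathbb P_{\mathcal{D}_k}(f(X)\neq g^*(X))$ --- but by a different route. You condition on $X=x$ and invoke the classical pointwise excess-risk identity
\[
\mathbb P(g(X)\neq Y\mid X=x)-\mathbb P(g^*(X)\neq Y\mid X=x)=|2\eta(x)-1|\,\mathbb{I}_{\{g(x)\neq g^*(x)\}},
\]
then integrate and bound $|2\eta(X)-1|$ by $1$. The paper instead works entirely at the level of events, decomposing $\{f(X)\neq Y\}$ according to whether $Y=g^*(X)$ and rearranging to write the excess risk as $\mathbb P_{\mathcal{D}_k}(f(X)\neq g^*(X))$ minus a nonnegative correction term, which it then drops. (Incidentally, the paper's displayed intermediate identity undercounts that correction term by a factor of two, though its final inequality is unaffected; your identity is exact.) Your version buys a sharper intermediate expression --- the exact excess risk weighted by the margin $|2\eta(X)-1|$ --- which would immediately yield improved rates under a margin/low-noise condition, whereas the paper's argument is purely combinatorial and needs no mention of $\eta$. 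Your handling of the conditioning on $\mathcal{D}_k$ via independence of $(X,Y)$ from the training subsample is the right justification and matches what the paper implicitly does.
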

\begin{proof} [Proof of Lemma \ref{lema}]
First we write,
\begin{align}\label{error}
\mathbb P_{\mathcal{D}_k}\big(f(X)\neq Y\big)- L^* &= \mathbb P_{\mathcal{D}_k}\big(f(X)\neq
Y\big)-P\big(g^*(X)\neq Y\big) \nonumber\\ &= \mathbb P_{\mathcal{D}_k}\big(f(X)\neq Y,Y=g^*(X)\big)
\nonumber\\ &\hspace{0.3cm}+ \mathbb P_{\mathcal{D}_k}\big(f(X)\neq Y, Y\neq g^*(X)\big)
-P\big(g^*(X)\neq Y\big)\nonumber \\ &= \mathbb P_{\mathcal{D}_k}\big(f(X)\neq g^*(X)\big)\\
&\hspace{0.3cm} + \mathbb P_{\mathcal{D}_k}\big(f(X)\neq Y, Y\neq g^*(X)\big)-P\big(g^*(X)\neq
Y\big) \nonumber \\ &= \mathbb P_{\mathcal{D}_k}\big(f(X)\neq g^*(X)\big) -
\mathbb P_{\mathcal{D}_k}\big(g^*(X)\neq Y, f(X)= Y\big), \nonumber
\end{align}
where in the last equality we have used that 
\[
P\big(g^*(X)\neq Y\big)= \mathbb P_{\mathcal{D}_k}\big(g^*(X)\neq Y, f(X)\neq Y\big) +
\mathbb P_{\mathcal{D}_k}\big(g^*(X)\neq Y, f(X)= Y\big),
\]
implies 
\[
 \mathbb P_{\mathcal{D}_k}\big(g^*(X)\neq Y,f(X)= Y\big) = \mathbb P_{\mathcal{D}_k}\big(g^*(X)\neq
Y, f(X)\neq Y\big) - P\big(g^*(X)\neq Y\big).
\]
Therefore, replacing in (\ref{error}) we get that
\begin{align} \label{boundl3}
\mathbb P_{\mathcal{D}_k}\big(f(X)\neq Y\big)- L^* &=  \mathbb P_{\mathcal{D}_k}\big(f(X)\neq
g^*(X)\big) -
\mathbb P_{\mathcal{D}_k}\big(g^*(X)\neq Y, f(X)= Y\big)\nonumber \\ &\le \mathbb
P_{\mathcal{D}_k}\big(f(X)\neq g^*(X)\big),
\end{align}
which by hypothesis converges to zero as $k \to \infty$ and the Lemma is proved.
\end{proof}
\begin{proof} [Proof of Theorem \ref{consistencia}]
We will prove part b) of the Theorem since part a) is a direct consequence of it. By (\ref{boundl3}), 
it suffices to prove that, for $k$ large enough: 
$$\mathbb P_{\mathcal{D}_k}(g_T(X,\alpha)\neq g^*(X))=\mathcal{O}\Big(\max\big\{\exp(-C(n-k)),\bm{\beta_{Rk}}\big\}\Big).$$ 
We first split $\mathbb P_{\mathcal{D}_k}(g_T(X,\alpha)\neq g^*(X))$ into two terms,
\begin{align*} 
\mathbb P_{\mathcal{D}_k}(g_T(X,\alpha)\neq g^*(X)) &= \mathbb P_{\mathcal{D}_k}(g_T(X,\alpha)\neq g^*(X),g^*(X)=1) \nonumber \\ &\hspace{0.3cm}+\mathbb P_{\mathcal{D}_k}(g_T(X,\alpha)\neq g^*(X),g^*(X)=0) \doteq I +II.
\end{align*}
Then we will prove that, for $k$ large enough,
$I=\mathcal{O}\Big(\max\big\{\exp(-C_1(n-k)),\bm{\beta_{Rk}}\big\}\Big)$. The proof that 
$II=\mathcal{O}\Big(\max\big\{\exp(-C_2(n-k)),\bm{\beta_{Rk}}\big\}\Big)$ is completely analogous and we omit it.
Finally, taking $C=\min\{C_1,C_2\}$, the proof will be completed.
In order to deal with term $I$, let us define the vectors 
\begin{align*}
\mathbf{g_{Rk}}(X) =\  & \big(g_{1k}(X),\dots,g_{Rk}(X)\big)\in \{0,1\}^R,\\ 
\nu(X)= \ & \Big(1,\dots,1,g_{(R+1)k}(X),\dots,g_{Mk}(X)\Big)\in \{0,1\}^M.
\end{align*}
Then, 
\begin{align*}
I &= \mathbb P_{\mathcal{D}_k}(g_T(X,\alpha)\neq g^*(X),g^*(X)=1)  \nonumber\\
 &\le \mathbb P_{\mathcal{D}_k}(g_T(X,\alpha)\neq g^*(X),g^*(X)=1, \mathbf{g_{Rk}}(X)=\mathbf{1})
\nonumber \\ &\hspace{0.5cm}+\sum_{m=1}^R \mathbb P_{\mathcal{D}_k}(g_T(X,\alpha)\neq
g^*(X),g^*(X)=1,g_{mk}(X)=0)\nonumber \\
&\le \mathbb P_{\mathcal{D}_k}(g_T(X,\alpha)\neq g^*(X),g^*(X)=1,
\mathbf{g_{Rk}}(X)=\mathbf{1})\nonumber \\ 
&\hspace{0.5cm}+\sum_{m=1}^R \mathbb P_{\mathcal{D}_k}(g^*(X)\neq g_{mk}(X)) \\ &\le \mathbb
P_{\mathcal{D}_k}\Big(T_n(\mathbf{g_k}(X), \alpha)\leq
1/2\big|g^*(X)=1,\mathbf{g_{Rk}}(X)=\mathbf{1}\Big) 
 \\ &\hspace{0.5cm}+\sum_{m=1}^R \mathbb P_{\mathcal{D}_k}(g^*(X)\neq g_{mk}(X))
\\ &\doteq I_A + I_B.\nonumber 
\end{align*} 
Observe that, conditioning to $\mathbf{g_{Rk}}(X)=\mathbf{1}$ and defining 
$$
Z_{j} \doteq \mathbb{I}_{\left\{\frac{1}{M}\sum_{m=1}^M \mathbb{I}_{\{g_{mk}(X_j)= \nu(m)\}} \ge
1-\alpha\right\}},
$$
where $\nu(m)=\nu(X)(m)$ is the $m$-th entry of the vector $\nu(X)$, we can rewrite
$T_n(\mathbf{g_{k}}(X), \alpha) $ as
\[
T_n(\mathbf{g_{k}}(X), \alpha) = \frac{\sum_{j=k+1}^n  Z_j Y_j}{\sum_{i=k+1}^n Z_i}.
\]
Therefore, 
\begin{align}\label{proba}
I_A&= \mathbb P_{\mathcal{D}_k}\left(\frac{\frac{1}{n-k}\sum_{j=k+1}^n  Z_j
Y_j}{\frac{1}{n-k}\sum_{i=k+1}^n Z_i} \le \frac{1}{2} \Big| g^*(X)=1,
\mathbf{g_{Rk}}(X)=\mathbf{1}\right)\nonumber \\ &= \mathbb
P_{\mathcal{D}_k}\left(\frac{1}{n-k}\sum_{j=k+1}^n  Z_j (Y_j-1/2) \le 0  \Big| g^*(X)=1,
\mathbf{g_{Rk}}(X)=\mathbf{1}\right). 
\end{align}
In order to use a concentration inequality to bound this probability, we need to compute the
expectation of $Z_j (Y_j-1/2) =
Z_j Y_j - Z_j/2 $. To do this, observe that 
\[
E(Z_j Y_j)=\mathbb P_{\mathcal{D}_k}\left(\frac{1}{M}\sum_{m=1}^M \mathbb{I}_{\{g_{mk}(U)= \nu(m) \}} \ge
1-\alpha,V=1\right),
\]
and 
\begin{equation*}\label{esp1}
E(Z_j) = \mathbb P_{\mathcal{D}_k}\left(\frac{1}{M}\sum_{m=1}^M \mathbb{I}_{\{g_{mk}(U)=\nu(m)\}} \ge
1-\alpha \right),
\end{equation*}
being $(U,V)$ independent of $(X,Y)$, $\mathcal{D}_k$ and with the same law as $(X,Y)$. Since
\begin{equation*} \label{inc1}
\left\{ \mathbf{g_{Rk}}(U)=\mathbf{1} \right\} \subset \left\{ \frac{1}{M}\sum_{m=1}^M
\mathbb{I}_{\{g_{mk}(U)= \nu(m)\}} \ge 1-\alpha \right\}\doteq A_\alpha,
\end{equation*}
we have that
\begin{align}\label{esp2}
E(Z_j Y_j)-E(Z_j)/2 & = \mathbb P_{\mathcal{D}_k}(V=1|A_\alpha)\mathbb P_{\mathcal{D}_k}(A_\alpha)-\mathbb P_{\mathcal{D}_k}(A_\alpha)/2 \nonumber\\
& = \mathbb P_{\mathcal{D}_k}(A_\alpha)\Big[\mathbb P_{\mathcal{D}_k}(V=1|A_\alpha)-1/2\Big]\\
& \geq \mathbb{P}_{\mathcal{D}_k}\big(\mathbf{g_{Rk}}(U)=1\big)\Big[\mathbb
P_{\mathcal{D}_k}(V=1|A_\alpha)-1/2\Big].\nonumber
\end{align}
Now, since for $m=1,\ldots,R$, $g_{mk} \to g^*$ in probability as $k \to \infty$, 
\begin{equation} \label{lim1}
\mathbb P_{\mathcal{D}_k}\left( \mathbf{g_{Rk}}(U)=\mathbf{1}\right) \to \mathbb{P}(g^*(U)=1)\doteq p^*>0.
\end{equation}
On the another hand, we have that, for $k$ large enough,
$\mathbb P_{\mathcal{D}_k}(V=1|A_\alpha)> 1/2$. Indeed, for $m=1,\dots,R$, let us consider the
events $B_{mk}=\{g_{mk}(U)=g^*(U)\}$ which, by hypothesis, for $k$ large enough verify
$$
\mathbb P \big(\cap_{m=1}^R B_{mk}\big)>1-\varepsilon,
$$
for all $\varepsilon>0$. In particular, we can take $\varepsilon>0$ such that $\mathbb
P(V=1|g^*(U)=1)(1-\varepsilon)>1/2$. This implies that 
\begin{align} \label{eq}
\mathbb P_{\mathcal{D}_k}(V=1|A_\alpha)= \ & \frac{\mathbb P_{\mathcal{D}_k}(V=1,A_\alpha,\cap_{m=1}^R B_{mk})}{\mathbb P_{\mathcal{D}_k}(A_\alpha)} \nonumber\\
&\quad + \frac{\mathbb P_{\mathcal{D}_k}(V=1,A_\alpha,(\cap_{m=1}^R B_{mk})^c)}{\mathbb P_{\mathcal{D}_k}(A_\alpha)}\nonumber\\
\geq \ &\frac{\mathbb P_{\mathcal{D}_k}(V=1,A_\alpha,\cap_{m=1}^R B_{mk})}{\mathbb
P_{\mathcal{D}_k}(A_\alpha)}\\
= \ &\frac{\mathbb P_{\mathcal{D}_k}(V=1,A_\alpha\big|\cap_{m=1}^RB_{mk})} {\mathbb
P_{\mathcal{D}_k}(A_\alpha)}(1-\varepsilon).\nonumber
\end{align}
Conditioning to $\cap_{m=1}^R B_{mk}$ the event $A_\alpha$ equals $C_\alpha$ given by
\begin{equation} \label{condaalpha}
\left\{R\mathbb{I}_{\{g^*(U)= 1\}} +\sum_{m=R+1}^M \mathbb{I}_{\{g_{mk}(U)= \nu(m)\}}\ge M(1-\alpha)
\right\}\doteq C_\alpha.
\end{equation}
However, $\alpha<1/2$ imply that $C_\alpha=\{g^*(U)=1\}$. Indeed, from the inequality  $R\geq M(1-\alpha)$, 
it is clear that $\{g^*(U)=1\}\subset C_\alpha$. 
On the other hand, $R\geq M(1-\alpha)>M/2$ and $\alpha<1/2$ imply that $M-R<M/2<M(1-\alpha)$, 
and so the sum in the second term of (\ref{condaalpha}) is at most $M-R$ and consequently, 
$\{g^*(U)=1\}^c\subset C_\alpha^c$. Then, combining this fact with (\ref{eq}) we have that, for $k$ large enough

\begin{align}\label{otraeq}
\mathbb P_{\mathcal{D}_k}(V=1|A_\alpha)&\geq \  \frac{\mathbb
P_{\mathcal{D}_k}(V=1,g^*(U)=1\big|\cap_{m=1}^RB_{mk})}{\mathbb
P_{\mathcal{D}_k}(g^*(U)=1)}(1-\varepsilon) \nonumber \\
&=\ \frac{\mathbb P_{\mathcal{D}_k}(V=1,g^*(U)=1)}{\mathbb P_{\mathcal{D}_k}(g^*(U)=1)}(1-\varepsilon)\\
&=\ \mathbb P(V=1|g^*(U)=1)(1-\varepsilon) \nonumber \\& >1/2. \nonumber
\end{align}
Therefore, from (\ref{lim1}) and (\ref{otraeq}) in (\ref{esp2}) we get 
\[
 E(Z_j Y_j)-E(Z_j)/2  > c >0.
\]
Going back to (\ref{proba}), conditioning to $\nu(X)$ and using the Hoeffding inequality 
for $|Z_j (Y_j-1/2)| \le 1/2$, for $k$ large enough we have
\begin{small}
\begin{align*}
I_A 
&= \mathbb P_{\mathcal{D}_k}\left(\frac{1}{n-k}\sum_{j=k+1}^n  -\big(Z_j (Y_j-1/2) - E(Z_j
(Y_j-1/2)) \big)\ge c \Big| g^*(X)=1, \mathbf{g_{Rk}}(X)=\mathbf{1}\right) \\ &\le 
\exp \left\{- C_1(n-k)\right\},
\end{align*}
\end{small}
with $C_1 = 2c^2$. On the another hand, by hypothesis we have  
\begin{equation*}
I_B=\sum_{m=1}^M \mathbb P_{\mathcal{D}_k}(g^*(X)\neq g_{mk}(X))=\mathcal{O}(\bm{\beta_{Rk}}),
\end{equation*}
which concludes the proof. 
\end{proof}

\begin{proof}[Proof of Theorem \ref{optimalidad}]
First we write,
\begin{align*}
\mathbb{P}_{\mathcal{D}_k}\big(g_T(X,\alpha) \neq Y \big) &=
\mathbb{P}_{\mathcal{D}_k}\big(T_n(\mathbf{g_k}(X),\alpha) > 1/2, Y = 0\big) \\ &\hspace{1cm}+
\mathbb{P}_{\mathcal{D}_k}\big(T_n(\mathbf{g_k}(X),\alpha) \leq 1/2, Y = 1\big) \\ &\doteq
I + II.
\end{align*}
Then,
 \begin{align*}
I &= \sum_{\nu \in \mathbb{C}}\mathbb{P}_{\mathcal{D}_k}\big(T_n(\mathbf{g_k}(X),\alpha) > 1/2,
(X,Y) \in A_{\nu}^0\big) \\ &= \sum_{\nu \in \mathbb{C}}
\mathbb{P}_{\mathcal{D}_k}\left(\frac{\sharp\{j:(X_j,Y_j) \in A_{\nu}^1\}}{l}
> \frac{\sharp\{j:(X_j,Y_j) \in A_{\nu}^0\}}{l}, (X,Y) \in
A_{\nu}^0 \right),
\end{align*}
and,
 \begin{align*}
 II &= \sum_{\nu \in \mathbb{C}}
\mathbb{P}_{\mathcal{D}_k}\big(T_n(\mathbf{g_k}(X),\alpha) \le 1/2, (X,Y) \in
A_{\nu}^1\big) \\ &= \sum_{\nu \in \mathbb{C}}
\mathbb{P}_{\mathcal{D}_k}\left(\frac{\sharp\{j:(X_j,Y_j) \in A_{\nu}^1\}}{l} \leq
\frac{\sharp\{j:(X_j,Y_j) \in A_{\nu}^0\}}{l}, (X,Y) \in A_{\nu}^1
\right).
\end{align*}
Therefore, since for all $\nu$, $ \mathbb{P}_{\mathcal{D}_k}\big((X,Y) \in
A_{\nu}^1\big) \neq \mathbb{P}_{\mathcal{D}_k}\big((X,Y) \in A_{\nu}^0\big)$ almost surely,
\begin{align}\label{combinado}
\lim_{l \to \infty} \mathbb{P}_{\mathcal{D}_k}\big(g_T(X,\alpha) \neq Y \big)  &=
\sum_{\nu\in \mathbb{C}} \mathbb{I}_{\big\{\mathbb{P}_{\mathcal{D}_k}\big((X,Y) \in A_{\nu}^1\big) >
\mathbb{P}_{\mathcal{D}_k}\big((X,Y) \in A_{\nu}^0\big)\big\}} \mathbb{P}_{\mathcal{D}_k}\big((X,Y) \in A_{\nu}^0\big)\big) \nonumber \\
&+ \sum_{\nu\in \mathbb{C}} \mathbb{I}_{\big\{\mathbb{P}_{\mathcal{D}_k}\big((X,Y) \in A_{\nu}^1\big) <
\mathbb{P}_{\mathcal{D}_k}\big((X,Y) \in A_{\nu}^0\big)\big\}} \mathbb{P}_{\mathcal{D}_k}\big((X,Y) \in
A_{\nu}^1\big)\big) \nonumber \\ &= \sum_{\nu\in \mathbb{C}} \min\Big\{
\mathbb{P}_{\mathcal{D}_k}\big((X,Y) \in
A_{\nu}^1\big), \mathbb{P}_{\mathcal{D}_k}\big((X,Y) \in A_{\nu}^0\big)\Big\}.
\end{align}
On the other hand,
\begin{align}\label{unosolo}
\noindent \mathbb{P}_{\mathcal{D}_k}\big((g_{mk}(X) \neq Y\big) &=
\mathbb{P}_{\mathcal{D}_k}\big(g_{mk}(X)=0, Y=1\big) + \mathbb{P}_{\mathcal{D}_k}\big(g_{mk}(X)=1,
Y=0\big) \nonumber \\ &=\mathbb{P}_{\mathcal{D}_k} \left( \bigcup_{\nu : \nu(i) = 0} (X,Y) \in
A_{\nu}^1 \right) + \mathbb{P}_{\mathcal{D}_k}
\left(\bigcup_{\nu : \nu(i) = 1} (X,Y) \in A_{\nu}^0\right)  \nonumber
\\ &= \sum_{\nu : \nu(i) = 0} \mathbb{P}_{\mathcal{D}_k}\big((X,Y) \in A_{\nu}^1\big) + \sum_{\nu :
\nu(i) = 1} \mathbb{P}_{\mathcal{D}_k}\big((X,Y) \in A_{\nu}^0\big)\nonumber \\ &\ge \sum_{\nu\in
\mathbb{C}} \min\Big\{\mathbb{P}_{\mathcal{D}_k}\big((X,Y) \in
A_{\nu}^1\big),\mathbb{P}_{\mathcal{D}_k}\big((X,Y) \in A_{\nu}^0\big)\Big\}.
\end{align}
Therefore, from (\ref{combinado}) and (\ref{unosolo}), for each $m$ we get
\[
\lim_{l \to \infty} \mathbb{P}_{\mathcal{D}_k}\big(g_T(X,\alpha) \neq Y \big)  \le
\mathbb{P}_{\mathcal{D}_k}\big(g_{mk}(X) \neq Y\big).
\]
\end{proof}

\section*{Acknowledgment} We would like to thank Gerard Biau and James Malley for helpful suggestions.

\section*{References}

\end{document}